\newtheorem{rema}{Remark}
\newtheorem{lemm}{Lemma}
\newtheorem{theo}{Theorem}
\newcommand{\C}[1][]{\ensuremath{{\mathbb{C}^{#1}} }}
\newcommand{\R}[1][]{\ensuremath{{\mathbb{R}^{#1}} }}
\renewcommand{\S}[1][]{\ensuremath{{\mathbb{S}^{#1}} }}
\renewcommand{\H}[1][]{\ensuremath{{\mathbb{H}^{#1}} }}
\newcommand{\D}[1][]{\ensuremath{{\mathbb{D}^{#1}} }}
\renewcommand{\P}[1][]{\ensuremath{{\mathbb{P}^{#1}} }}
\def\Re{ \mathrm{Re}\, }
\newcommand{\M}{{\cal M}}
\newcommand{\s}{{\cal S}}
\newcommand{\<}{\langle}
\renewcommand{\>}{\rangle}
\newcommand{\eps}{\epsilon}
\newcommand{\te}{\theta}
\newcommand{\ka}{\kappa}
\newcommand{\la}{\lambda}
\newcommand{\be}{\beta}
\date{}
\newcommand{\co}{{\texttt{cos}\varepsilon}}
\newcommand{\si}{{\texttt{sin}\varepsilon}}
\newcommand{\ta}{{\texttt{tan}\varepsilon}}
\newcommand{\coe}{{\texttt{cot}\varepsilon}}
\newcommand{\Ho}{\mbox{Hor}}
\title{Hopf Hypersurfaces in pseudo-Riemannian complex and para-complex space forms}
\author{ Henri Anciaux\footnote{Universidade de S\~ao Paulo; supported by CNPq (PDE 211682/2013-6)},
Konstantina Panagiotidou\footnote{Faculty of Engeneering, Aristotle University of Thessaloniki, Greece}}
\begin{document}

\maketitle

\centerline{\textbf {\large{Abstract}}}

\bigskip

{\small The study of real hypersurfaces in pseudo-Riemannian complex space forms and para-complex space forms, which are the pseudo-Riemannian generalizations of the complex space forms, is addressed. It is proved that there are no umbilic hypersurfaces, nor real hypersurfaces  with parallel shape operator in such spaces. Denoting by $J$ be the complex or  para-complex structure of a pseudo-complex or para-complex space form respectively, a  non-degenerate hypersurface of such space with unit normal vector field $N$ is said to be \em Hopf \em if the tangent vector field $JN$ is a principal direction. It is proved that if a hypersurface is Hopf, then the corresponding principal curvature (the \em Hopf \em curvature) is constant. It is also observed that
  in some cases a Hopf hypersurface must be, locally, a tube over a complex (or para-complex) submanifold, thus generalizing  previous results of Cecil, Ryan and Montiel.}

\bigskip

\centerline{\small \em 2010 MSC:  53C42, 53C40, 53B25
\em }


\section*{Introduction}

The study of  real hypersurfaces in  complex space forms, i.e.\ the complex projective space $\C\P^n$ and the complex hyperbolic space $\C\H^n$, have attracted a lot of attention in the last decades (see \cite{NR} for a survey of the subject and references therein). The complex structure $J$ of a complex space form induces a rich structure on real hypersurface; in particular, on an arbitrary oriented hypersurface $\s$ of $\C\P^n$ or $\C\H^n$ with unit vector normal field $N$, a canonical tangent field, called \em the structure vector field \em or \em the Reeb vector field, \em is defined by $\xi :=-JN$.  If $\xi$ is a principal direction on $\s$, i.e.\ an eigenvector of the shape operator, $\s$ is called a \em Hopf hypersurface\em. It turns out that
the principal curvature associated to the structure vector $\xi$ (the \em Hopf principal curvature\em) of a connected, Hopf hypersurface must be constant (this was proved in \cite{Ma} in the projective case and in \cite{KS} in the hyperbolic case).
 Moreover, in \cite{CR}, Hopf hypersurfaces in $\C\P^n$  are locally characterized as tubes over complex submanifolds, while in \cite{Mo}, the same statement is proved for Hopf hypersurfaces of $\C\H^n$ whose Hopf principal curvature  $a$  satisfies $|a|>2$. Recently Hopf hypersurfaces of  $\C\H^n$ with small Hopf principal curvature, i.e.\ satisfying $|a| \leq 2$, have been studied through a kind of generalized Gauss map in \cite{IR} and \cite{Iv}, while in \cite{Ki} a unified approach is proposed, relating Hopf hypersurfaces to totally complex (or para-complex) submanifolds of  some natural quaternionic manifold.

\medskip

The purpose of this paper is  to address the study of real hypersurfaces in \em pseudo-complex space forms \em $\C\P^n_p$, which are the pseudo-Riemannian generalizations of the complex space forms, and in \em para-complex space form \em $\D\P^n$. The latter space is the para-complex analog of $\C\P^n$ and is equipped with both a pseudo-Riemannian metric and  a \em para-complex \em structure, still denoted by $J$, which satisfies  $J^2=Id$. Furthermore, given a real hypersurface in $\D\P^n$ with non-degenerate induced metric, the Hopf field   is defined exactly as in the complex case. We refer to the next section for the precise  definition of $\D\P^n$ and a brief description of its geometry.
Since both the pseudo-complex and the para-complex case will be studied simultaneously, we define $\eps$ in such way that $J^2=-\eps Id$, i.e.\ $\eps=1$ corresponds to the complex case and $\eps=-1$ to the para-complex case.
Moreover, $\M$ will denote the  pseudo-Riemannian complex space form $\C\P^n_p$ or the para-complex space form $\D\P^n$, with holomorphic or para-holomorphic curvature $4c$, where $c: = \pm 1$.

\medskip
Our results are:

\begin{theo} \label{one} There exist no umbilic real hypersurface, nor  real hypersurface  with parallel shape operator,  in $\M$.
\end{theo}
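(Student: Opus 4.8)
The plan is to read off everything from the Codazzi equation of the hypersurface, whose right-hand side is governed by the ambient curvature tensor $\bar R$ of $\M$. Write $\xi:=-JN$ for the structure (Hopf) field and $\phi X$ for the tangential part of $JX$. First I would record the unified form of the curvature tensor, valid in both the pseudo-complex ($\eps=1$) and para-complex ($\eps=-1$) cases, as it follows from the description of $\M$ in the next section:
\[
\bar R(X,Y)Z = c\Big(\langle Y,Z\rangle X - \langle X,Z\rangle Y + \eps\big(\langle JY,Z\rangle JX - \langle JX,Z\rangle JY - 2\langle JX,Y\rangle JZ\big)\Big).
\]
Substituting $Z=N$, using that $X,Y$ are tangent, that $J$ is skew-adjoint (so $\langle JX,N\rangle = \langle X,\xi\rangle$) and that $JN=-\xi$, the tangential component collapses to
\[
(\bar R(X,Y)N)^{\top} = c\,\eps\big(\langle Y,\xi\rangle\,\phi X - \langle X,\xi\rangle\,\phi Y + 2\langle \phi X,Y\rangle\,\xi\big),
\]
so that the Codazzi equation reads $(\nabla_X A)Y - (\nabla_Y A)X = (\bar R(X,Y)N)^{\top}$ (up to the usual sign convention, which is immaterial below). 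The entire proof rests on the fact that this right-hand side carries a nonzero component along $\xi$.

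Both statements then follow from a single computation on the holomorphic distribution $\mathcal{D}:=\xi^{\perp}\cap T\s$, which is $J$-invariant of real dimension $2n-2$ and, since $\langle\xi,\xi\rangle=\eps\langle N,N\rangle\neq 0$, non-degenerate; hence (for $n\geq 2$, the only interesting range, as $n=1$ gives real curves) it contains a non-null vector $X$. Taking $Y=\phi X=JX\in\mathcal{D}$, both arguments are orthogonal to $\xi$, so the first two terms drop and
\[
\langle (\bar R(X,Y)N)^{\top}, \xi\rangle = 2c\,\eps\,\langle \phi X, Y\rangle\,\langle \xi,\xi\rangle = 2c\,\eps\,\langle X,X\rangle\,\langle N,N\rangle \neq 0,
\]
because $c,\eps\in\{\pm 1\}$ and $X$ is non-null. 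On the other hand, I would show that under either hypothesis the left-hand side of Codazzi is orthogonal to $\xi$ when $X,Y\in\mathcal{D}$, producing the contradiction.

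For the parallel case $\nabla A=0$ the left-hand side is identically zero, so the vanishing of its $\xi$-component is immediate. For the umbilic case $A=\rho\,\mathrm{Id}$ one computes $(\nabla_X A)Y=(X\rho)Y$, whence $(\nabla_X A)Y-(\nabla_Y A)X=(X\rho)Y-(Y\rho)X$ lies in $\mathrm{span}(X,Y)\subseteq\mathcal{D}$ and is thus orthogonal to $\xi$. In both situations the $\xi$-component of the left-hand side vanishes while that of the right-hand side does not, a contradiction, and the theorem follows. I expect the only delicate points to be bookkeeping rather than conceptual: fixing consistent sign conventions for $\bar R$, for the skew-adjointness of $J$ and for $\langle JX,JY\rangle=\eps\langle X,Y\rangle$ across both cases, and verifying the non-degeneracy facts (that $\xi$ is non-null and that $\mathcal{D}$ admits a non-null vector) that legitimise the choice of $X$ and $Y=JX$.
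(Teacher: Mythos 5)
Your proposal is correct and follows essentially the same route as the paper: both proofs feed each hypothesis into the Codazzi equation and contradict $c\neq 0$ by extracting a structure term, the paper setting $Y=\xi$ and pairing with $\varphi X$, while you keep $X$, $Y=\varphi X$ horizontal and pair with $\xi$ --- the same computation transposed. Two minor remarks: your unified curvature tensor places the factor $\eps$ on the $J$-terms rather than on $X\wedge Y$ as in the paper (an overall sign $\eps$, immaterial here, as you note), and your explicit requirement that $X$ be \emph{non-null} is in fact a needed sharpening of the paper's umbilic argument, where ``horizontal and non-vanishing'' alone does not guarantee $\<\varphi X,\varphi X\>=\eps\<X,X\>\neq 0$ in a pseudo-Riemannian metric.
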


\begin{theo} \label{constant}
Let $\s$ be a connected,  non-degenerate hypersurface of $\M$ which is Hopf, i.e.\ its structure vector $\xi$ is a principal direction of $\s$. Then the corresponding principal curvature $a$, i.e.\ defined by  $A\xi = a\xi$, is constant.
\end{theo}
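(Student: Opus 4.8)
The plan is to adapt the classical Maeda and Ki--Suh argument, keeping careful track of the sign $\eps$ (so that $J^2=-\eps\,\mathrm{Id}$) and working throughout with operator identities rather than with eigenvectors, since in the pseudo-Riemannian setting the shape operator $A$ need not be diagonalisable. Write $\phi X:=(JX)^{\top}$ for the tangential part of $JX$ and $\eta(X):=\langle X,\xi\rangle$, so that $\phi\xi=0$ and $\eta(\xi)=\langle\xi,\xi\rangle=\pm1$; note that, since $J$ is an isometry or an anti-isometry and $N$ is a unit normal, $\xi=-JN$ is never null, so that $\xi^{\perp}$ is a non-degenerate, $A$-invariant distribution on which $\phi$ is invertible. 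From $\tilde\nabla J=0$ and the Weingarten formula one gets $\nabla_X\xi=\phi AX$, and from the explicit curvature tensor of $\M$ the Codazzi equation contracts, at $Y=\xi$, to $(\nabla_X A)\xi-(\nabla_\xi A)X=\tilde c\,\phi X$ for a constant $\tilde c=\pm c$ fixed by $\eps$ and $\langle\xi,\xi\rangle$.

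First I would differentiate the Hopf relation $A\xi=a\xi$: using $\nabla_X\xi=\phi AX$ this gives $(\nabla_X A)\xi+A\phi AX=(Xa)\xi+a\phi AX$, which, combined with the reduced Codazzi equation, yields the master identity
\[(\nabla_\xi A)X=(Xa)\,\xi+a\,\phi AX-A\phi AX-\tilde c\,\phi X.\]
Pairing this with $\xi$ and using $\langle\phi AX,\xi\rangle=\langle\phi X,\xi\rangle=0$ together with $\langle A\phi AX,\xi\rangle=a\langle\phi AX,\xi\rangle=0$, the right-hand side collapses to $(Xa)\langle\xi,\xi\rangle$; on the left, self-adjointness of $\nabla_\xi A$ and the computation $(\nabla_\xi A)\xi=(\xi a)\xi$ (which follows from $\nabla_\xi\xi=a\phi\xi=0$) give $\langle X,(\nabla_\xi A)\xi\rangle=(\xi a)\eta(X)$. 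Hence $Xa=\pm(\xi a)\eta(X)$, i.e.\ $\operatorname{grad}a$ is everywhere proportional to $\xi$.

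It then remains to prove $\xi a=0$. Symmetrising the master identity (the terms $(Xa)\eta(Y)$ cancel once $\operatorname{grad}a\parallel\xi$ is known) produces the operator identity $2A\phi A=a(\phi A+A\phi)-2\tilde c\,\phi$. On the other hand, writing the symmetry of the Hessian $\nabla^2 a(X,Y)=\nabla^2 a(Y,X)$ and using $(\nabla_X\eta)(Y)=\langle\phi AX,Y\rangle$, together with the antisymmetry relation $\langle\phi AX,Y\rangle-\langle\phi AY,X\rangle=\langle(\phi A+A\phi)X,Y\rangle$, one obtains $(\xi a)\,\langle(\phi A+A\phi)X,Y\rangle=0$, so that on the open set where $\xi a\neq0$ one must have $\phi A+A\phi=0$. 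The main obstacle is to rule out this exceptional behaviour: feeding $\phi A+A\phi=0$ into the operator identity forces $A\phi A=-\tilde c\,\phi$ and hence $A^{2}=\lambda\,\mathrm{Id}$ on $\xi^{\perp}$ for a constant $\lambda\in\{c,-c\}$. In the cases where such an $A$ is incompatible with self-adjointness this is already a contradiction; in the remaining cases the constraint must be returned to the full Codazzi equation — not merely its $\xi$-contraction — to force $\xi a=0$, which is delicate precisely because $A$ cannot be assumed diagonalisable in the pseudo-Riemannian and para-complex settings. Once $\xi a\equiv0$, the relation of the second paragraph gives $\operatorname{grad}a\equiv0$, and connectedness of $\s$ yields that $a$ is constant.
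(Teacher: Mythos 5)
Your proposal reproduces, step for step, the content of the paper's Lemma \ref{l3}: your ``master identity'' is the paper's Equation (\ref{c1}) combined with the $\xi$-contracted Codazzi equation, your gradient computation is (\ref{b5}), your symmetrised operator identity is (\ref{b6}), and your Hessian-symmetry argument giving $(\xi\cdot a)(\varphi A+A\varphi)=0$ is exactly the paper's proof of (\ref{b7}). All of that is correct, including your sound insistence on operator-level identities. The genuine gap is the endgame. On the open set where $\xi\cdot a\neq 0$ you correctly obtain $\varphi A+A\varphi=0$ and, feeding this into (\ref{b6}), $A^2=-c\eps\,Id$ on the horizontal distribution; but there you stop, remarking that the surviving cases ``must be returned to the full Codazzi equation\dots which is delicate.'' Those cases are not vacuous and cannot be dispatched by pointwise algebra: in indefinite signature a self-adjoint $A$ can satisfy $A^2=-Id$ on an indefinite horizontal space (no real eigenvalues at all), and $A^2=Id$ is consistent with self-adjointness even in the Riemannian setting --- this is precisely why Ki--Suh's $\C\H^n$ theorem is harder than Maeda's $\C\P^n$ one. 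Worse, the configuration $\varphi A+A\varphi=0$, $A^2=-c\eps\,Id$ is algebraically consistent with (\ref{b6}) and even with Lemma \ref{l4} (a real eigenvalue $\ka$, if one exists, satisfies $\ka^2=-c\eps$ and $\bar\ka=-\ka$, exactly as anticommutation demands), so no further manipulation of the identities you have listed can yield the contradiction: genuinely new, differential input is required, and your sketch does not identify it.

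The paper's missing ingredient is a second differentiation: since $\varphi A+A\varphi=0$ holds on an open set, it may be differentiated covariantly along $\xi$. Using $(\overline{\nabla}_\xi\varphi)=0$, which follows from (\ref{b2}) together with $A\xi=a\xi$, this gives $\varphi(\overline{\nabla}_\xi A)Z+(\overline{\nabla}_\xi A)\varphi Z=0$ for horizontal $Z$ (the paper's (\ref{r1})); each summand is then evaluated via the Codazzi equation applied to the pairs $(\xi,Z)$ and $(\xi,\varphi Z)$, together with (\ref{c1}) and $Z\cdot a=\varphi Z\cdot a=0$. In the paper this combination produces the additional linear relation $aAZ=-c\eps Z$ on $\Ho$, which is incompatible with the $\varphi$-invariance criterion $\ka^2-a\ka-c\eps=0$ of Lemma \ref{l4} unless $c=0$, the desired contradiction; and the whole argument is operator-level, never assuming diagonalisability, so the obstacle motivating your hedge is real but is overcome by this step rather than by the pointwise Codazzi equation. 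One caveat if you carry this out yourself: the anticommutation signs in the evaluation are treacherous (for horizontal $W$ one has $\varphi A\varphi W=\eps AW$, since $A\varphi W=-\varphi AW$ and $\varphi^2 AW=-\eps AW$), and the relative sign between the two summands of (\ref{r1}) is the crux --- with the opposite sign the two terms merely reproduce $A^2=-c\eps\,Id$ and give nothing new --- so this final computation deserves careful independent verification before you rely on it.
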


\begin{theo} \label{tubes}

Let $\s$ be a connected,  non-degenerate hypersurface of $\M$  with unit normal $N$.
Assume that $\s$ is Hopf and denote by $a$
 the corresponding principal curvature, i.e. $A\xi = a\xi.$
Then if $c \, \eps \<N,N\>=1$, or if $c \, \eps \<N,N\>=-1$ and $|a|>2$, then $\s$ is, locally, a tube over a  complex or para-complex submanifold.
\end{theo}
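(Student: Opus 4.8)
The plan is to realize $\s$ as a tube via the normal geodesic flow of $\M$, following the focal-set strategy of Cecil--Ryan and Montiel but carried out with the generalized trigonometric functions $\co,\si,\coe$ that unify the complex and para-complex cases. For $p\in\s$ let $\gamma_p$ be the geodesic of $\M$ with $\gamma_p(0)=p$ and $\gamma_p'(0)=N_p$, and set $F_t(p):=\gamma_p(t)=\exp^{\M}_p(tN_p)$. Since $\s$ is Hopf we have $\xi=-JN$ and $J\xi=\eps N$, so the plane $\mathrm{span}\{N,\xi\}$ is $J$-invariant; because $\M$ has parallel curvature, this plane is parallel along $\gamma_p$ and spans a totally geodesic $J$-invariant surface of constant curvature $4c\,\eps\<N,N\>$. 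By Theorem~\ref{constant} the Hopf curvature $a$ is constant, which is what makes a single radius $r$ work simultaneously at every point.

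First I would compute $dF_t$ through Jacobi fields along $\gamma_p$. As $\M$ is a (para-)complex space form its curvature is explicit, so the normal Jacobi field $Y$ with $Y(0)=X$, $Y'(0)=-AX$ can be integrated in closed form. The tangent space splits into the $\xi$-line, on which the relevant sectional curvature is the holomorphic one $4c\,\eps\<N,N\>$, and the complementary holomorphic distribution $\mathcal H=\{X:\<X,\xi\>=\<X,N\>=0\}$, on which it is $c\,\eps\<N,N\>$. Writing $Y(t)=f(t)\,E(t)$ along a parallel principal frame, the two regimes are governed by
\[
f''+4c\,\eps\<N,N\>\,f=0\qquad\text{and}\qquad f''+c\,\eps\<N,N\>\,f=0,
\]
whose solutions are $\co,\si$ when $c\,\eps\<N,N\>=1$ and the hyperbolic ones when $c\,\eps\<N,N\>=-1$.

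Next I would fix the focal radius $r$ by the $\xi$-equation, i.e.\ by an $a=2\coe(2r)$-type relation. When $c\,\eps\<N,N\>=1$ this equation is solvable for every value of $a$, whereas when $c\,\eps\<N,N\>=-1$ the hyperbolic cotangent, after the factor $2$, only attains values of modulus exceeding $2$, so a real $r$ exists precisely under the hypothesis $|a|>2$; this is exactly the dichotomy in the statement. For a principal vector $X\in\mathcal H$ with $AX=\lambda X$ the Codazzi equation for Hopf hypersurfaces forces its partner $\phi X$ (the tangential part of $JX$, again in $\mathcal H$) to be principal, with eigenvalue $\mu$ related to $\lambda$ by a M\"obius transformation whose coefficients are built from $a$ and $c\,\eps\<N,N\>$. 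The algebraic point, which I would verify by substitution, is that $\lambda$ satisfies the $\mathcal H$-focal condition at this same $r$ if and only if $\mu$ does; hence $\ker dF_r$ is spanned by $\xi$ together with a $J$-invariant subspace of $\mathcal H$, and its orthogonal complement in $T\s$ is $J$-invariant as well.

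With this in hand I would conclude: $dF_r$ has locally constant rank, so $\s':=F_r(\s)$ is, locally, an embedded submanifold, $F_r:\s\to\s'$ is the tube projection, and $\s$ is the tube of radius $r$ over $\s'$; since the image of $dF_r$ is $J$-invariant, $T\s'$ is $J$-invariant, i.e.\ $\s'$ is a complex (resp.\ para-complex) submanifold. The main obstacle is the constant-rank step: one must show that the focal eigenvalue occurs with locally constant multiplicity and that $\ker dF_r$ is a smooth $J$-invariant distribution, for which the constancy of $a$, the M\"obius duality between $\lambda$ and $\mu$, and restriction to the open set where the number and multiplicities of principal curvatures are locally constant are all essential. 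The indefinite metric and the sign $\<N,N\>$ enter only through the single parameter $c\,\eps\<N,N\>$, which is precisely why the two generalized-trigonometric regimes capture every case.
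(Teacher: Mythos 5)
Your proposal is correct in substance and arrives at exactly the same key identities as the paper --- the focal-radius equation $a=2\coe'(2\te)$ with its solvability dichotomy in $\eps':=c\,\eps\<N,N\>$, the identification of the kernel of the focal map with $Ker\big(\co'(\te)\,Id-\si'(\te)A\big)$, and the $J$-invariance of that kernel via the M\"obius relation of Lemma \ref{l4} applied to the root $\coe'(\te)$ of $\ka^2-a\ka-c\eps=0$ --- but by a genuinely different route. You work downstairs in $\M$ with the normal exponential map and Jacobi fields, which is the original Cecil--Ryan/Montiel method; the paper instead lifts everything to the hyperquadric $\widetilde{\M}$ through the pseudo-Riemannian submersion $\pi$, where the focal map is the explicit linear combination $\widetilde{f}=\co'(\te)\widetilde{F}+\si'(\te)\widetilde{N}$ and its differential is computed by elementary differentiation plus O'Neill's submersion formulas, the equation $a=2\coe'(2\te)$ emerging from requiring $d\widetilde{f}(\xi)$ to be vertical. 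The lift buys closed-form global formulas and entirely sidesteps Jacobi-field machinery, at the modest cost of choosing the horizontal lift $\widetilde{N}$ with $\<\widetilde{N},\widetilde{J}\widetilde{F}\>=0$; your route is more intrinsic and makes the geometric meaning of $\te$ as a focal distance transparent, and the constancy of $a$ (Theorem \ref{constant}) plays the same role in both, allowing one radius to work at every point.

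One wrinkle you should repair: in indefinite signature the shape operator need not be diagonalizable, so ``a parallel principal frame'' is not available in general. This is fixable without changing your argument. Since $\xi$ is a non-null eigenvector, the horizontal distribution $\mathcal{H}=\xi^\perp\cap T\s$ is $A$-invariant, the normal Jacobi operator acts on the parallel transport of $\mathcal{H}$ as the scalar $c\,\eps\<N,N\>$ (and as $4c\,\eps\<N,N\>$ on the $\xi$-line), and hence the Jacobi field with initial data $(X,-AX)$, $X\in\mathcal{H}$, is $P_t\big(\co'(t)X-\si'(t)AX\big)$ operator-wise, with no eigenbasis needed; this is precisely the shape of the paper's formula $d\widetilde{f}(v)=d\widetilde{F}\big(\co'(\te)v-\si'(\te)Av\big)$, and it makes your kernel computation valid verbatim. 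Correspondingly, your constant-rank step should be phrased in terms of the eigenspace of the single real eigenvalue $\coe'(\te)$ (whose dimension you assume locally constant), not in terms of multiplicities of a full set of principal curvatures, which may not exist. Note that the paper itself leaves the final verification that $F(U)$ is the tube of radius $\te$ over $f(U)$ to the reader, so your explicit flagging of the constant-rank issue is, if anything, more careful than the source.
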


\begin{rema} \em In the case $c=1, \eps=1$ and $p=0$, $\M$ is the complex projective space $\C\P^n$, and if $c=-1, \eps=1$ and $p=n$, we have $\M=\C\H^n$,  the complex hyperbolic space. Hence  Theorem \ref{tubes} generalizes \cite{CR} and \cite{Mo}. Observe that in these two cases, the metric being positive, we have $\<N,N\>=1$.
\em
\end{rema}

This paper is organized as follows: in Section \ref{s1}   the geometry of the pseudo-Riemannian complex and the para-complex space forms is described. Section \ref{s2} contains basic relations about the geometry of real hypersurfaces in $\M$ and the proof of Theorem \ref{one}. In Section \ref{s3}  four Lemmas about real hypersurfaces and the proof of Theorem \ref{constant} are presented. Finally, in Section \ref{s4} the proof of Theorem \ref{tubes} is given and at the end of the Section some open problems are proposed for further research on this area.

\section{The ambient spaces: pseudo-Riemannian complex and para-complex space forms} \label{s1}
\subsection{The abstract structures}
All along the paper the ambient space will be a $2n$-dimensional pseudo-Riemannian manifold $(\M,\<\cdot,\cdot\>, J)$ endowed with is a complex or para-complex structure $J$, i.e.\ a $(1,1)$ tensor field satisfying $J^2 = -\eps Id$ which is compatible with  respect to $\<\cdot,\cdot\>$, i.e.\
$$ \< J \cdot,J\cdot\> = \eps \<\cdot,\cdot\>.$$
In other words, $J$ is an isometry in the complex case and an anti-isometry in the para-complex case.
This assumption implies that the signature of $\<\cdot,\cdot\>$ must be even in the complex case and neutral in the para-complex case.

The bilinear map $\omega(X,Y) := \<JX,Y\>$ is alternate and non-degenerate. Furthermore, the 2-form $\omega$ is closed, hence symplectic. Therefore, the triple $(\<\cdot,\cdot\>,J, \omega)$ is a \em pseudo-K\"ahler \em or \em para-K\"ahler \em structure.

We assume furthermore that the curvature ${R}$ of $\<\cdot,\cdot\>$ satisfies
$${R}(X,Y) ={c} \left(  \eps  X \wedge Y +  J X \wedge JY + 2 \< X,JY\>J \right),$$
where the notation $X\wedge Y$ denotes the operator $Z \to (X \wedge Y) Z= \<Y,Z\>X -\<X,Z\>Y$ and where $c$ is a real constant. Observe that  if  $X$ is a non-null vector, we have $\<R(X,JX)JX,X\>=4c, $ i.e.\ any complex or para-complex $2$-plane $Span(X,JX)$ has sectional curvature $4c$. The constant  $4c$ is called the \em holomorphic \em  or \em para-holomorphic \em curvature of
$(\M , \<\cdot,\cdot\>,J)$.

Observe that the rescaled  $\la \<\cdot,\cdot\>$, where $\la$ is a positive constant has holomorphic curvature $\la^{-2} c$. On the other hand,
replacing the metric $\<\cdot,\cdot\>$
 by its opposite $-\<\cdot,\cdot\>$  leaves invariant the curvature operator ${R}$. It follows that if $(\M,\<\cdot,\cdot\>,J)$ has (para-)holomorphic curvature $4c$, then  $(\M,-\<\cdot,\cdot\>,J)$ has (para-)holomorphic curvature $-4c$.

In the next two sections  instances of such manifolds will be described explicitly.

\subsection{Pseudo-Riemannian complex space forms}

We consider the space $\C^{n+1}$ endowed with the pseudo-Hermitian form:
$$ \<\<\cdot,\cdot\>\>_p = - \sum_{j=1}^p dz_j d\bar{z}_j +   \sum_{j=p+1}^{n+1} dz_j d\bar{z}_j $$
The corresponding metric $\<\cdot,\cdot\>_{2p} := \Re\<\<\cdot,\cdot\>\>_p$  has signature $(2p, 2(n+1-p)).$
We define the hyperquadrics
$$ \S^{2n+1}_{p,c}:=\{ z \in \C^{n+1} | \,  \<z,z\>_{2p}  =c \},$$
For example,  $\S^{2n+1}_{0,1} = \S^{2n+1}$ is the round unit sphere;

The pseudo-Riemannian complex space forms are the quotients of these  hyperquadrics by the natural $\S^1$-action:
$$ \C \P^n_{p,c} := \S^{2n+1}_{2p,c}   \slash \sim,$$
where $ z \sim z' $ if there exists $\te \in \R$ such that $z' = (\cos \te, \sin \te) .z$.
In particular
\begin{itemize}
\item[-] $\C\P^{n}_{0,1} = \C\P^n$ is the complex projective space;
\item[-] $\C\P^n_{n,-1} =\C \H^n$ is the complex hyperbolic space;
\end{itemize}
We denote by $\pi$ the canonical projection $\pi: \S^{2n+1}_{2p,c} \to \C\P^n_{p,c}. $
We endow $\C\P^n_p$ with the metric $\<\cdot,\cdot\>$ that makes the projection $\pi$ a pseudo-Riemannian submersion.
The  projection $\pi: \S^{2n+1}_{2p,c} \to \C\P^n_{p,c}$ also induces a natural complex structure in $\C\P^n_{p,c}$.
It is easy to check that $(\C\P^n_{p, c},J,\<\cdot,\cdot\>)$ is a pseudo-K\"{a}hler manifold and that its curvature tensor satisfies
$$ R(X,Y) ={c} \big(  X \wedge Y + J X \wedge JY + 2 \< X,JY\>J \big).$$
In particular, $\C\P^n_{p,c}$ has constant holomorphic curvature $4c$.

Observe that the involutive map $ (z_1, \ldots , z_n ) \mapsto (z_{p+1}, \ldots , z_n, z_1, \ldots z_p)$ is an anti-isometry between $\S^{2n+1}_{p, c} $   and $\S^{2n+1}_{n+1-p, -c}$. It follows that the spaces $\C\P^n_{p,c}$ and $\C\P^n_{n+1-p, -c}$ are anti-isometric.
\subsection{Para-complex space forms}
The set of \em  para-complex \em (or \em split-complex, \em or \em double\em)  numbers $\D$
is the two-dimensional real vector space $\R^2$ endowed with the commutative algebra structure whose product  rule is given by
$$ (x , y) . (x' , y')= (xx'+yy', xy'+x'y).$$
 The para-complex projective plane is the set
of para-complex lines of $\D^{n+1}$.
We consider the neutral metric
$$\<\cdot,\cdot\>_*:= \sum_{j=1}^n dx_j^2- dy_j^2  $$
and the hyperquadric
$$ \S^{2n+1}_{n+1,-1}:= \{ z \in \D^{n+1} | \,  \<z,z\>_* =-1 \} .$$
Then we define:
$$ \D\P^n := \S^{2n+1}_{n+1,-1} \slash \sim,$$
where $ z \sim z' $ if there exists $\te \in \R$ such that $z' = (\cosh \te, \sinh \te) .z$.
We endow $\D\P^n$ with the metric $g$ that makes the projection $\pi: \S^{2n+1}_{n,-1} \to \D\P^n$ a pseudo-Riemannian submersion. The metric $g$ has neutral signature $(n,n)$. For technical reasons it is convenient to introduce the "polar" space $\overline{\D\P^n}$ of $\D\P^n$ by
$$ \overline{\D\P^n} := \S^{2n+1}_{n+1,1} \slash \sim.$$
The  anti-isometry $J$ of $\D^{n+1}$ induces  canonically an anti-isometry between $\D\P^n$ and $ \overline{\D\P^n}$.

\medskip

According to \cite{GM}, the curvature operator of $\D\P^n$ is given by
$$ R(X,Y) =-X \wedge Y +  J X \wedge JY + 2 \< X,JY\>J   .$$
In particular,  $\D\P^n$ has constant para-holomorphic curvature $4$ (but it is not characterized by this property).
On the other hand, $\overline{\D\P^n}$ has constant para-holomorphic curvature $-4$.




\section{Auxiliary relations about real hypersurfaces and proof of Theorem \ref{one} } \label{s2}
In this section let $\s$ be an immersed real hypersurface in $\M$, whose induced metric is non-degenerate.  This
 implies the local existence of a unit normal vector field $N$. After a possible change of metric $\<\cdot,\cdot\> \to -\<\cdot,\cdot\>,$ there is no loss of generality in assuming that $\<N,N\>=1$, and we will  do so in the remainder of the paper. Observe that reversing the metric has the effecting of reversing its curvature $c$. Hence, without loss of generality, we could alternatively assume that $c=1$ and let $\<N,N\>$ take the two possible values $\pm 1$. However the first choice seems more natural.

\subsection{The structure of a real hypersurface in \M}
The \emph{structure vector field }$\xi$ is given  by
\begin{eqnarray}\label{a1}
\xi:=-\eps JN.
\end{eqnarray}
It follows that $N=J\xi$ and that $\<\xi,\xi\>=\eps $. The orthogonal complement $\xi^\perp:= \Ho$,  a $(2n-2)$-dimensional subspace of $T \s$, will be refered as to the \em horizontal distribution. \em Given a vector $X$ tangent to $\s$, the vector $JX$ is not necessarily tangent to $\s$ but its tangential part, that we denote by $\varphi$, is horizontal. Introducing  the one-form $\eta:=\<J \cdot,N\>$,  we have
\begin{eqnarray}\label{a3}
JX&=&\varphi X+  \<JX,N \> N \\
&=&\varphi X + \eta(X)N. \nonumber
\end{eqnarray}
Observe also that
$$\eta(X)=\<JX,N\> =-\< X,JN\> =\eps\<X,\xi\>$$
and
\begin{eqnarray}\label{a2}
\eta(\xi)=\<J\xi,N\>=\<N,N\>=1.
\end{eqnarray}
On the other hand, doing $X=\xi$ in Equation (\ref{a3}), we get
\begin{eqnarray}\label{a4}
\varphi\xi=0.
\end{eqnarray}

Now, we have
\begin{eqnarray}
-\eps X&=&J^{2}X   \nonumber  \\
&=&J \big(\varphi X+ \eta(X)N \big)\nonumber\\
&=&J(\varphi X) + \eta(X) JN \nonumber\\
&=&\varphi (\varphi X) + \eta(\varphi X)N+ \eta(X)JN.
\end{eqnarray}
Considering the tangent and normal parts of this equation, we get that  $\eta \circ \varphi =0$ and
$$ -\eps X = \varphi^2 X -\eps  \eta(X)\xi,$$
so that
\begin{eqnarray}\label{a5}
\varphi^2 = -\eps Id +\eps  \eta ( \cdot) \xi.
\end{eqnarray}
Finally, denoting by $g$ the induced  metric  on $\s$, we have the following relation:
\begin{eqnarray}\label{a6}
\<\varphi X, \varphi Y\>&=&\<JX- \eta(X)N,JY- \eta(Y)N\> \nonumber\\
&=&\< JX,JY\>-\eta(X) \<N,JY\>- \eta(Y)\<N,JX\>+\eta(X)\eta(Y) \<N,N\>\nonumber\\
&=&\eps \<X,Y\>- 2 \eta(X)\eta(Y) +\eta(X)\eta(Y)  \nonumber\\
&=&\eps \<X,Y\>-  \eta(X)\eta(Y).
\end{eqnarray}

We conclude that, according to
Relations (\ref{a1}), (\ref{a2}), (\ref{a4}), (\ref{a5}) and (\ref{a6}), the quadruple  $(\varphi,\eta,\xi,\<\cdot,\cdot\>)$
defines  an \emph{almost contact metric structure} on $\s$ when $\eps=1$ and  an \emph{almost para-contact metric structure} on $\s$  when $\eps=-1$.

The Gauss and the Weingarten formulas are respectively given by the equations
\begin{eqnarray}
{\nabla}_{X}Y&=&\overline{\nabla}_{X}Y+ \<AX,Y\>N  \label{Gauss-formula}\\
{\nabla}_{X}N&=&-AX,\label{Weingarten-formula}
\end{eqnarray}
where $\nabla$ and  $\overline{\nabla}$ are the Levi-Civita connection on $\M$ and $\s$ respectively and $A$ is the shape operator of $\s$ with respect to $N$.
Denoting by $\overline{R}$ and $R$ the curvature of $\overline{\nabla}$ and $\nabla$ respectively, the
Gauss equation takes the form:
$$ \<{R}(X,Y)Z, W\> = \<\overline{R}(X,Y)Z,W\> + \< AX, Z\> \<AY, W\>-\< AX, W\>\<AY,Z\>$$
for $X,Y,Z$ and $W$ tangent to $\s$.
Hence
\begin{eqnarray*}\overline{R}(X,Y)Z &=&c\Big( \eps  X \wedge Y +    J X \wedge JY + 2 \< X,JY\> \Big)Z + \<AY,Z\> AX - \<AX,Z\> AY\\
      &=&c \Big( \eps  X \wedge Y +  \varphi X \wedge \varphi Y + 2 \< X, \varphi Y\>\varphi \Big) Z + (AX \wedge AY)Z,
\end{eqnarray*}
so that
$$ \overline{R}(X,Y)= AX \wedge AY + c\left(  \eps X \wedge Y +  \varphi X \wedge \varphi Y + 2 \< X, \varphi Y\> \varphi \right).$$
\bigskip
We now deal with Codazzi equation: for $X,Y$ and $Z$ tangent to $\s$, we have
$$ \< R(X,Y)Z, N\>= \< (\overline{\nabla}_X A)Y -(\overline{\nabla}_Y A)X, Z  \>.$$
Using the expression of ${R}$, we have
\begin{eqnarray*} \<R(X,Y)Z, N\>&=&c\Big( \eps  \<(X \wedge Y)Z,N\> +\<( J X \wedge JY )Z,N\>+
 2 \< X,JY\>\<JZ,N\> \Big) \\
 &=&c\Big(  \eps \<Y,Z\>\<X,N\> - \eps \<X,Z\> \< Y, N\>+ \<JY,Z\>\<JX,N\> - \<JX,Z\> \< JY, N\>\\
  & &  - 2\<X, \varphi Y\> \<Z, JN\> \Big)\\
&=&c \ \Big(  \<\varphi Y,Z\>\eta(X)- \<\varphi X,Z\>\eta(Y) + 2\eps \<X, \varphi Y\> \<Z, \xi\> \Big)
\end{eqnarray*}
to get
\begin{eqnarray}\label{Codazzi-equation}
(\overline{\nabla}_X A)Y - (\overline{\nabla}_Y A)X = c  \Big( \eta(X) \varphi Y - \eta(Y) \varphi X +
 2  \eps \< X, \varphi Y\> \xi \Big).
 \end{eqnarray}


\subsection{Proof of Theorem \ref{one}}

The proof is an easy consequence of the Codazzi equation.

\medskip

\noindent Assume first that $\s$ is umbilic, i.e.\  there exists $\lambda \in C^{\infty}(\s)$ such that $A=\lambda Id$. Then the Codazzi equation (\ref{Codazzi-equation}) becomes:
\[(X\cdot\lambda)Y-(Y\cdot\lambda)X=c \Big( \eta(X)\varphi Y- \eta(Y)\varphi X +2\eps \<X,\varphi Y\> \xi \Big),\]
Taking $X$ horizontal and non-vanishing, and $Y=\xi$ yields
\[(X\cdot\lambda)\xi-(\xi\cdot\lambda)X=c  \varphi X.\]
The inner product of the above relation with $\varphi X$ implies $c=0$, which is a contradiction.

\bigskip

 Assume now that $\s$ has parallel shape operator, i.e. $(\overline{\nabla}_{X}A)Y=0$, for any tangent vectors $X,Y$. Then  the Codazzi equation becomes
$$ 0 = c \Big( \eta(X)\varphi Y-\eta(Y)\varphi X +2 \eps\<X,\varphi Y\> \xi \Big).$$
Taking $X$ horizontal and non-vanishing, and $Y=\xi$ yields
$c  \varphi X=0.$
Since $\varphi X$ does not vanish, we get $c=0$, a contradiction.

\section{Proof of Theorem \ref{constant}}  \label{s3}

Before providing the proof of Theorem some basic Lemmas which hold for real hypersurfaces in $\M$ are given.
\subsection{Basic Lemmas}
\begin{lemm} \label{Prop13} \label{l1}
Let \s be a real hypersurface in $\M$. Then:
\begin{eqnarray}\label{b1}
\overline{\nabla}_X \xi = \eps\varphi A X
\end{eqnarray}
and
\begin{eqnarray}\label{b2}
(\overline{\nabla}_X \varphi)Y =  \eta(Y) AX-\eps\< AX,Y \>\xi .
\end{eqnarray}
\end{lemm}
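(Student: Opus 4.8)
The plan is to derive both formulas directly from the Weingarten formula \eqref{Weingarten-formula}, the decomposition \eqref{a3} of $JX$ into tangential and normal parts, and the fact that the ambient connection $\nabla$ is compatible with $J$ (since $\M$ is pseudo-K\"ahler or para-K\"ahler, $J$ is parallel: $\nabla_X(JY)=J(\nabla_X Y)$). These two ingredients, together with the Gauss formula \eqref{Gauss-formula}, should yield everything by a routine but careful bookkeeping of tangential versus normal components.

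For the first identity \eqref{b1}, I would start from $\xi=-\eps JN$ and differentiate: $\overline{\nabla}_X\xi$ is the tangential part of $\nabla_X\xi = -\eps\,\nabla_X(JN) = -\eps\, J(\nabla_X N) = \eps\, J(AX)$, using Weingarten. Now I apply \eqref{a3} to $J(AX)$, writing $J(AX)=\varphi(AX)+\eta(AX)N$; the normal part $\eta(AX)N$ drops out when I take the tangential projection, leaving $\overline{\nabla}_X\xi=\eps\,\varphi AX$. One small check is that the normal component is indeed consumed correctly; since $\<\xi,\xi\>=\eps$ is constant, $\<\overline{\nabla}_X\xi,\xi\>=0$, which is consistent with $\varphi AX$ being horizontal by \eqref{a4} and the definition of $\varphi$.

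For the second identity \eqref{b2}, I would expand $(\overline{\nabla}_X\varphi)Y=\overline{\nabla}_X(\varphi Y)-\varphi(\overline{\nabla}_X Y)$ and compute $\nabla_X(JY)$ two different ways. On one hand, using \eqref{a3}, $JY=\varphi Y+\eta(Y)N$, so
\[
\nabla_X(JY)=\nabla_X(\varphi Y)+X(\eta(Y))\,N+\eta(Y)\,\nabla_X N,
\]
and I expand $\nabla_X(\varphi Y)$ by Gauss into $\overline{\nabla}_X(\varphi Y)+\<A X,\varphi Y\>N$, while $\nabla_X N=-AX$ by Weingarten. On the other hand, $J$-parallelism gives $\nabla_X(JY)=J(\nabla_X Y)=J(\overline{\nabla}_X Y+\<AX,Y\>N)$, which I again split using \eqref{a3} applied to $\overline{\nabla}_X Y$ and the fact that $JN=\xi/(-\eps)=\eps\xi$ (from $\xi=-\eps JN$ and $J^2=-\eps\,Id$). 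Equating the two expressions and separating tangential and normal components should produce \eqref{b2} from the tangential part, with the normal parts giving a consistency relation.

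The main obstacle will be keeping the $\eps$-signs straight while matching components, since the para-complex case ($\eps=-1$) flips several signs relative to the complex case; the relation $JN=\eps\xi$ and the identity $\eta(X)=\eps\<X,\xi\>$ must be applied consistently. Once the tangential components are correctly isolated, the appearance of the terms $\eta(Y)AX$ and $-\eps\<AX,Y\>\xi$ is forced, so the only real care needed is in the algebra of the projection.
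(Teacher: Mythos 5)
Your plan is essentially the paper's own proof: for (\ref{b1}) you differentiate $\xi=-\eps JN$, use the parallelism of $J$ together with the Weingarten formula to get $\nabla_X\xi=\eps JAX$, and project via (\ref{a3}); for (\ref{b2}) you expand $\nabla_X(JY)$ with the Gauss formula, the Weingarten formula and (\ref{a3}) on both sides of the $J$-parallelism identity, which is exactly the bookkeeping the paper carries out (organized there as one long chain of equalities rather than as ``two computations equated'' --- an inessential difference). One concrete correction before you execute it: you assert $JN=\eps\xi$, but since $\eps^2=1$ the definition $\xi=-\eps JN$ gives $JN=-\eps\xi$ (equivalently, $N=J\xi$, so $JN=J^2\xi=-\eps\xi$). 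This sign matters: in your second computation the term $\<AX,Y\>JN$ is precisely what produces the second summand of (\ref{b2}), and your sign would output $+\eps\<AX,Y\>\xi$ instead of the correct $-\eps\<AX,Y\>\xi$. With that slip fixed, the tangential components yield (\ref{b2}) exactly, and, as you anticipated, the normal components cancel identically (the check uses (\ref{b1}) and the skew-symmetry of $\varphi$ to compute $X\cdot\eta(Y)$), so your argument closes.
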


\begin{proof}
Using successively Gauss equation,  Weingarten equation  and Equation (\ref{a3}), we first calculate
\begin{eqnarray*}
\overline{\nabla}_X \xi &=&{\nabla}_{X} \xi -  \< AX, \xi\> N\\
 &=&-\eps {\nabla}_X JN  -  \< AX, \xi\> N\\
&=& -\eps J  \nabla_X N -  \< AX, \xi\> N \\
&=&  \eps J A X-  \<AX,\xi\>N\\
&=&\eps \varphi AX +  \eta(AX) -  \<AX,\xi\>N.
\end{eqnarray*}
Taking the tangential part of this  yields Equation (\ref{b1}).

As for Equation (\ref{b2}), using again the Gauss, Weingarten equation and  Equation (\ref{a3}),  we have
\begin{eqnarray*} (\overline{\nabla}_X \varphi ) Y&=& \overline{\nabla}_X (\varphi Y) -\varphi (\overline{\nabla}_X Y)\\
&=& \nabla_X (\varphi Y) -   \< AX, \varphi Y\>N  -\varphi (\overline{\nabla}_X Y)\\
&=&\nabla_X \big(JY -\eps \eta(Y)N \big)  -   \< AX, \varphi Y\>N  -\varphi (\overline{\nabla}_X Y)\\
&=&J \nabla_X Y -\eps \nabla_X (\eta(Y)N )  -   \< AX, \varphi Y\>N  -\varphi (\overline{\nabla}_X Y)\\
&=&J \big( \overline{\nabla}_X Y   + \<AX,Y\>N  \big)
 -\eps  \big( \<\nabla_X Y, \xi\>N +\<Y,\nabla_X \xi\> N + \<Y,\xi\>\nabla_X N \big)  \\
&&  \quad \quad -   \< AX, \varphi Y\>N  -\varphi (\overline{\nabla}_X Y)\\
&=&  \eta(\overline{\nabla}_X Y) N +  \<AX,Y\> JN
- \eps  \big( \<\overline{\nabla}_X Y, \xi\>N +\<Y,\overline{\nabla}_X \xi\> N - \<Y,\xi\>AX \big)  \\
&&  \quad \quad -   \< AX, \varphi Y\>N \\
&=& -\eps  \<AX,Y\> \xi - \<Y, \varphi AX\> N + \eps  \<Y,\xi\> AX +  \<\varphi AX,Y\> N\\
&=& \big( \eta(Y) AX - \eps \<AX,Y\> \xi\big).
\end{eqnarray*} \end{proof}

\begin{lemm}  \label{l2}
The following two relations hold on a hypersurface $\s$ of $\M$:
\begin{eqnarray}
\<(\overline{\nabla}_X A)Y-(\overline{\nabla}_Y A)X,\xi \>=2c  \< X,\varphi Y \>,\label{b3}\\
\<(\overline{\nabla}_{X}A)\xi, \xi\>=\<(\overline{\nabla}_{\xi}A)X, \xi\>=\<(\overline{\nabla}_{\xi}A)\xi,X\>.\label{b4}
\end{eqnarray}
\end{lemm}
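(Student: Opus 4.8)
The plan is to derive both relations directly from the Codazzi equation (\ref{Codazzi-equation}) together with Lemma \ref{l1}, without any new geometric input.

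For Equation (\ref{b3}), I would simply take the inner product of the Codazzi equation with $\xi$. The right-hand side of (\ref{Codazzi-equation}) is $c\big(\eta(X)\varphi Y-\eta(Y)\varphi X+2\eps\<X,\varphi Y\>\xi\big)$, and pairing with $\xi$ kills the first two terms because $\varphi Y$ and $\varphi X$ are horizontal (so $\<\varphi Y,\xi\>=\<\varphi X,\xi\>=0$), while the last term contributes $2\eps\<X,\varphi Y\>\<\xi,\xi\>=2\eps\<X,\varphi Y\>\cdot\eps=2\<X,\varphi Y\>$, using $\<\xi,\xi\>=\eps$ and $\eps^2=1$. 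Thus $\<(\overline{\nabla}_XA)Y-(\overline{\nabla}_YA)X,\xi\>=2c\<X,\varphi Y\>$, which is exactly (\ref{b3}).

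For Equation (\ref{b4}), the key observation is that $A$ is self-adjoint, so $\<(\overline{\nabla}_XA)Y,Z\>$ is symmetric in $Y$ and $Z$ for each fixed $X$; this is just the product rule applied to $\<AY,Z\>=\<Y,AZ\>$ combined with metric compatibility of $\overline{\nabla}$. This symmetry immediately gives the third equality $\<(\overline{\nabla}_\xi A)X,\xi\>=\<(\overline{\nabla}_\xi A)\xi,X\>$. For the first equality, I would start from (\ref{b3}) with $Y=\xi$: since $\varphi\xi=0$ by (\ref{a4}), the right-hand side vanishes, yielding $\<(\overline{\nabla}_XA)\xi,\xi\>=\<(\overline{\nabla}_\xi A)X,\xi\>$. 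Combining the two gives the full chain of equalities in (\ref{b4}).

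I do not expect a serious obstacle here, as both identities are formal consequences of Codazzi and the self-adjointness of the shape operator. The only point requiring slight care is bookkeeping with the factors of $\eps$ and the sign conventions (in particular the relation $\<\xi,\xi\>=\eps$ rather than $1$, and $\eps^2=1$), which is precisely why the $2\eps$ in Codazzi collapses to a clean factor of $2$ in (\ref{b3}). One should also confirm that horizontality of $\varphi X$ is being used correctly, but this follows directly from the definition of $\varphi$ as the horizontal (tangential) part of $J$ established around Equation (\ref{a3}).
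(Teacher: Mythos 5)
Your proposal is correct and follows essentially the same route as the paper: pairing the Codazzi equation with $\xi$ (using $\<\xi,\xi\>=\eps$ and horizontality of $\varphi X$) for (\ref{b3}), then specializing $Y=\xi$ with $\varphi\xi=0$ for the first equality in (\ref{b4}). Your only cosmetic difference is that you invoke the self-adjointness of $\overline{\nabla}_X A$ as a general principle, whereas the paper verifies it by the explicit computation for $X=\xi$ using metric compatibility and self-adjointness of $A$ --- the identical argument, stated abstractly.
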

\begin{proof}
Taking the inner product of Codazzi equation (\ref{Codazzi-equation}) with $\xi$, recalling that $\<\xi,\xi\>=~\eps $, implies  (\ref{b3}).

The first equality in  (\ref{b4}) is  a particular case of (\ref{b3}) making $Y=\xi$.
For the second equality in (\ref{b4}) we have
\begin{eqnarray*}
\<(\overline{\nabla}_{\xi}A)\xi,X\>&=&\<\overline{\nabla}_\xi(A\xi), X\>-\<A\overline{\nabla}_{\xi}\xi,X\>\\
&=&\xi\cdot \<A\xi,X\>-\<A\xi,\overline{\nabla}_{\xi}X\>-\<\overline{\nabla}_{\xi}\xi,AX\>\\
&=&\xi\cdot \<A\xi,X\>-\<A\xi,\overline{\nabla}_\xi X\>-\xi \cdot \<\xi, AX\> +\<\xi,\overline{\nabla}_\xi (AX)\>\\
&=&-\<\xi,A\overline{\nabla}_\xi X\>+\<\xi,\overline{\nabla}_\xi (AX)\>\\
&=&\<\xi, (\overline{\nabla}_\xi A)X\>.
\end{eqnarray*}
\end{proof}

\begin{lemm}  \label{l3}
 Let $\s$ be a Hopf  hypersurface in $\M$ and $a$ the Hopf curvature, i.e. $A\xi = a\xi.$ Then the following relations hold on \s
\begin{eqnarray}
&&grad\; a=\eps  (\xi\cdot a)\xi,   \label{b5}\\
&&A\varphi A-\frac{a}{2}(A\varphi+\varphi A)-c \eps  \varphi=0,   \label{b6}\\
&&(\xi\cdot a)(\varphi A+A\varphi)=0   \label{b7}.
\end{eqnarray}
\end{lemm}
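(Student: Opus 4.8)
The plan is to prove the three relations of Lemma \ref{l3} in order, exploiting the Hopf condition $A\xi=a\xi$ together with the structure equations of Lemma \ref{l1} and the Codazzi-type identities of Lemma \ref{l2}. The overarching strategy is to differentiate the defining relation $A\xi=a\xi$ covariantly and substitute the formula $\overline{\nabla}_X\xi=\eps\varphi AX$ from Equation (\ref{b1}), which converts derivatives of $\xi$ into algebraic expressions in $A$ and $\varphi$.

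First I would establish (\ref{b5}). Starting from $(\overline{\nabla}_X A)\xi=\overline{\nabla}_X(A\xi)-A(\overline{\nabla}_X\xi)=(X\cdot a)\xi+a\overline{\nabla}_X\xi-A(\overline{\nabla}_X\xi)$ and inserting (\ref{b1}), I get an expression for $(\overline{\nabla}_X A)\xi$ in terms of $(X\cdot a)\xi$ and $(aId-A)\varphi AX$. Pairing this with $\xi$ and using $\<\varphi\cdot,\xi\>=0$ (since $\varphi$ is horizontal-valued, $\varphi\xi=0$, and $\<\varphi Z,\xi\>=\eps\<\varphi Z,\xi\>$ vanishes by compatibility) isolates $\<(\overline{\nabla}_X A)\xi,\xi\>=X\cdot a\cdot\eps$; on the other hand the symmetry (\ref{b4}) forces $\<(\overline{\nabla}_X A)\xi,\xi\>$ to equal the same quantity evaluated via $\xi$, and comparing for horizontal $X$ shows $X\cdot a=0$ on the horizontal distribution. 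Hence $grad\,a$ is proportional to $\xi$, and evaluating the coefficient using $\<\xi,\xi\>=\eps$ yields $grad\,a=\eps(\xi\cdot a)\xi$.

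Next, for (\ref{b6}), I would take the \emph{full} vector equation for $(\overline{\nabla}_X A)\xi$ derived above and combine it with $(\overline{\nabla}_\xi A)X$, using the Codazzi equation (\ref{Codazzi-equation}) specialized appropriately. Writing out $(\overline{\nabla}_X A)\xi-(\overline{\nabla}_\xi A)X$ from Codazzi gives $c(\eta(X)\varphi\xi-\eta(\xi)\varphi X+2\eps\<X,\varphi\xi\>\xi)=-c\varphi X$ (using $\varphi\xi=0$ and $\eta(\xi)=1$). Substituting the structural formula for $(\overline{\nabla}_X A)\xi$ and the analogous one for $(\overline{\nabla}_\xi A)X$, then taking the horizontal component and using (\ref{b5}) to kill the $(\xi\cdot a)$-terms where possible, should collapse everything into the Ricci-type identity $A\varphi A-\frac{a}{2}(A\varphi+\varphi A)-c\eps\varphi=0$. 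The essential manipulation is recognizing that $\varphi$ anticommutes suitably with $A$ in these traces; the factor $\frac{a}{2}$ emerges from symmetrizing $A\varphi+\varphi A$.

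Finally, (\ref{b7}) is the compatibility constraint: differentiating (\ref{b6}) (or re-examining the $\xi$-direction terms dropped in deriving it) produces a $(\xi\cdot a)$-weighted version of the anticommutator $\varphi A+A\varphi$, which must vanish. Concretely, I expect that applying $\overline{\nabla}_\xi$ to $A\xi=a\xi$ and comparing with (\ref{b6}) isolates $(\xi\cdot a)(\varphi A+A\varphi)=0$. The main obstacle is bookkeeping in step two: the derivation of (\ref{b6}) requires carefully tracking which terms are horizontal versus along $\xi$, correctly applying the symmetry (\ref{b4}), and ensuring that the $c$-dependent Codazzi contribution lands with the right sign and $\eps$-factor. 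Everything else is algebraic substitution of (\ref{b1}) into covariant derivatives of $A\xi=a\xi$.
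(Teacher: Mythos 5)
Your treatment of (\ref{b5}) is sound and matches the paper: compute $(\overline{\nabla}_X A)\xi=(X\cdot a)\xi+\eps\,(a\,Id-A)\varphi AX$, pair with $\xi$, and invoke the symmetry (\ref{b4}) of Lemma \ref{l2}. For (\ref{b6}), however, your plan as stated is circular. There is no independent ``analogous structural formula'' for $(\overline{\nabla}_\xi A)X$: the Codazzi equation (\ref{Codazzi-equation}) with $Y=\xi$ is the \emph{only} relation tying it to $(\overline{\nabla}_X A)\xi$, so substituting one into the other merely produces the identity $(\overline{\nabla}_\xi A)X=(X\cdot a)\xi+\eps\,(a\,Id-A)\varphi AX+c\,\varphi X$ (useful later, in the proof of Theorem \ref{constant}) and yields no constraint whatsoever on $A$; taking horizontal components does not change this. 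The missing idea is the self-adjointness of the derived tensor: since $A$ is self-adjoint, so is $\overline{\nabla}_Z A$ for each $Z$, hence $\<(\overline{\nabla}_X A)Y,\xi\>=\<(\overline{\nabla}_X A)\xi,Y\>$. The paper inserts the formula (\ref{c1}) here, antisymmetrizes in $X,Y$ (the $(X\cdot a)\<\xi,Y\>$ terms cancel by (\ref{b5})), equates the result with $2c\<X,\varphi Y\>$ from (\ref{b3}), and then uses self-adjointness of $a\,Id-A$ together with skew-symmetry of $\varphi$ to obtain $2A\varphi A-a(A\varphi+\varphi A)=2c\eps\varphi$. Note that (\ref{b4}), which you cite, is only the $\xi$-component of this self-adjointness and is too weak to drive the argument; and your remark that ``$\varphi$ anticommutes suitably with $A$'' assumes precisely what must not be assumed --- by (\ref{b7}) the anticommutator vanishes only where $\xi\cdot a\neq 0$.

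The gap in your proof of (\ref{b7}) is more serious: your concrete proposal, applying $\overline{\nabla}_\xi$ to $A\xi=a\xi$, is vacuous. Indeed $\overline{\nabla}_\xi\xi=\eps\varphi A\xi=\eps a\varphi\xi=0$, so both sides differentiate to $(\xi\cdot a)\xi$ and you obtain $0=0$; comparing with (\ref{b6}) adds nothing. Differentiating (\ref{b6}) instead introduces uncontrolled $\overline{\nabla}A$ and $\overline{\nabla}\varphi$ terms with no mechanism to isolate the factor $\xi\cdot a$. The idea you are missing is the closedness of the exact $1$-form $da$ (symmetry of the Hessian of $a$): writing $\mbox{grad}\,a=\be\xi$ with $\be:=\eps(\xi\cdot a)$, which is legitimate by (\ref{b5}), one expands $0=\<\overline{\nabla}_X(\mbox{grad}\,a),Y\>-\<\overline{\nabla}_Y(\mbox{grad}\,a),X\>$ using (\ref{b1}) to get $(X\cdot\be)\<\xi,Y\>-(Y\cdot\be)\<\xi,X\>=\be\,\<(\varphi A+A\varphi)X,Y\>$ up to sign conventions; setting $Y=\xi$ (and using $\varphi\xi=0$, $A\xi=a\xi$) gives $X\cdot\be=\eps(\xi\cdot\be)\<\xi,X\>$, which forces the first two terms to cancel identically, leaving $\be\,(\varphi A+A\varphi)=0$, i.e.\ (\ref{b7}). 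Without this Hessian-symmetry argument, or some substitute for it, your outline for (\ref{b7}) does not go through.
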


\begin{proof}

--- Proof of (\ref{b5}):
we first calculate, using several times Equation (\ref{b1}),
\begin{eqnarray*}
 (\overline{\nabla}_X A)\xi &=&  \overline{\nabla}_X (A\xi) - A \overline{\nabla}_X \xi \\
 &=& (X \cdot a )  \xi  + a \overline{\nabla}_X \xi -\eps A \varphi A X\\
&=&   (X \cdot a )  \xi  + \eps a\varphi AX - \eps A \varphi A X.
\end{eqnarray*}
Hence we obtain
\begin{eqnarray} \label{c1}
 (\overline{\nabla}_X A)\xi &=&  (X \cdot a )  \xi  +  \eps  (a Id - A) \varphi A X.
\end{eqnarray}
Taking the inner product of (\ref{c1}) with $\xi$ yields (taking into account that $\<\xi, \xi\> =~\!\eps  $)
\begin{eqnarray} \<(\overline{\nabla}_X A)\xi, \xi \> = \eps   (X \cdot a )=\eps  \< grad \;  a , X\>. \label{c2}\end{eqnarray}
On the other hand, making $X=\xi$ and recalling that $\varphi \xi$ vanishes, we get
\begin{eqnarray*}  (\overline{\nabla}_\xi A )\xi =   (\xi \cdot a )  \xi.\end{eqnarray*}
Putting together these last two equations, we conclude, using Lemma \ref{l2},
\begin{eqnarray*} \< grad \;  a , X\>&=& \eps  \<(\overline{\nabla}_X A)\xi, \xi \>\\
&=&  \eps  \<(\overline{\nabla}_\xi A)\xi, X \>\\
&=&  \eps  (\xi \cdot a ) \<\xi,X\>,
\end{eqnarray*}
from which Equation (\ref{b5}) follows.

\medskip

\noindent  --- Proof of (\ref{b6}):
first,  by an easy calculation,
$$ \<(\overline{\nabla}_X A)Y,\xi\> =\<(\overline{\nabla}_X A)\xi,Y\>$$
Then, using Equations (\ref{c1}), (\ref{c2}) and Lemma \ref{l2}, we get
\begin{eqnarray*}
\<(\overline{\nabla}_X A)Y,\xi\> &=&\<(\overline{\nabla}_X A)\xi,Y\>\\
&=& (X \cdot a)\<\xi,Y\>  + \eps \< (aId - A)\varphi AX,Y\>\\
&=&\eps  \<(\overline{\nabla}_ \xi A)\xi, X\>    \<\xi,Y\>  +\eps  \< (a Id - A)\varphi AX,Y\>\\
&=&  \eps (\xi \cdot a) \<\xi,X\>  \<\xi,Y\>  +\eps \< (a Id - A)\varphi AX,Y\>.
\end{eqnarray*}
Interchanging $X$ and $Y$ and substracting, we calculate
\begin{eqnarray*}
\<(\overline{\nabla}_X A)Y-(\overline{\nabla}_Y A)X,\xi\>&=&
\eps \big( \< (a Id - A)\varphi AX,Y\>-  \< (a Id - A)\varphi AY,X\>\big).
\end{eqnarray*}
Now, from  (\ref{b3}) (Lemma \ref{l2}) this implies
\begin{eqnarray*}
\eps \big( \< (a Id - A)\varphi AX,Y\>-  \< (a Id - A)\varphi AY,X\>\big)&=&2 c \eps \< X,\varphi Y\> \<\xi,\xi\>\\
&=& 2    c \< X,\varphi Y\>.
\end{eqnarray*}
It follows, using the facts that $A$ is self-adjoint (and therefore $aId -A$ as well) and that $\varphi$ is skew-symmetric, that
\begin{eqnarray*}
2\eps    c \< X,\varphi Y\>&=& \< (a Id - A)\varphi AX,Y\>-  \< (a Id - A)\varphi AY,X\>\\
&=& -\< X,   A \varphi (aId -A) Y\> -  \<X, (a Id - A)\varphi AY\>,
\end{eqnarray*}
which implies that
\begin{eqnarray*} 2\eps    c \varphi &=&- A \varphi (aId -A) -  (a Id - A)\varphi A\\
 &=& -a(A\varphi + \varphi A) +2 A\varphi A, \end{eqnarray*}
from which Equation (\ref{b6}) follows.

\bigskip

\noindent  --- Proof of (\ref{b7}): setting $\be:= \eps \xi \cdot a$ (so in particular $\mbox{grad}\;a = \be \xi$), we have
\begin{eqnarray*}
\<\overline{\nabla}_X (\mbox{grad}\;a) , Y \> -\<\overline{\nabla}_Y (\mbox{grad}\;a) , X \> &=& X  \cdot\<\mbox{grad}\;a , Y \> - \<\mbox{grad}\;a  , \overline{\nabla}_X Y \>   \\
 &&\; \; \; \quad -Y  \cdot \<\mbox{grad}\;a , X \> + \<\mbox{grad}\;a  , \overline{\nabla}_Y X \>  \\
&=&X \cdot (Y \cdot a ) - Y \cdot (X \cdot a ) + \<\mbox{grad}\;a  , \overline{\nabla}_X Y - \overline{\nabla}_Y X \>\\
&=&  \left([X,Y]-\overline{\nabla}_X Y - \overline{\nabla}_Y X \> \right) \cdot a \\
&=&0.
\end{eqnarray*}
It follows that
\begin{eqnarray} \label{c3}
0&=&\<\overline{\nabla}_X  \be \xi , Y \> -\<\overline{\nabla}_Y \be \xi , X \>  \nonumber \\
&=& (X \cdot \be) \< \xi,Y\> -  \be \< \varphi A X,Y\> -(Y \cdot \be) \< \xi,X\> -  \be \< \varphi A Y,X\>  \nonumber \\
&=& (X \cdot \be) \< \xi,Y\> -(Y \cdot \be) \< \xi,X\> -  \be \<(\varphi A +A \varphi  )X,Y\>.
\end{eqnarray}
Making $Y=\xi$ yields
\begin{eqnarray*}
0&=& X \cdot \be \<\xi,\xi\>-(\xi \cdot \be) \< \xi,X\>-  \be \<(\varphi A +A \varphi  )X,\xi\> \\
 &=&\eps  X \cdot \be -(\xi \cdot \be) \< \xi,X\>-   \be \<A \varphi X,\xi\>\\
 &=&\eps  X \cdot \be - (\xi \cdot \be) \< \xi,X\>-   \be \< \varphi X,a\xi\>\\
 &=&\eps  X \cdot \be - (\xi \cdot \be) \< \xi,X\>.
\end{eqnarray*}
Hence we have $X \cdot \be =\eps   (\xi \cdot \be) \< \xi,X\>$, which implies that
$ (X \cdot \be) \< \xi,Y\> =(Y \cdot \be) \< \xi,X\> .$ Combining with (\ref{c3}) yields
$$  \be \, \<(\varphi A +A \varphi  )X,Y\> ,$$
which implies the desired identity.
\end{proof}

\begin{lemm} \label{l4}
If $X$ is an principal vector of $A$ with principal curvature $\ka$, then $\varphi X$  is a principal vector with principal curvature
$$\bar{\ka}:=\frac{ \ka a+2 c \eps  }{2\ka -a}.$$
In particular the principal subspace
 $E_\ka:=\{ X \in T\s |  \, AX =\ka X\}$ is $\varphi$-invariant if and only if $\ka^2 - a\ka - c \eps   =0$.
\end{lemm}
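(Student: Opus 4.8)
The entire statement is an algebraic consequence of identity (\ref{b6}) from Lemma \ref{l3}, namely $A\varphi A - \frac{a}{2}(A\varphi + \varphi A) - c\eps\varphi = 0$, which on a Hopf hypersurface ties together the shape operator $A$ and the structure endomorphism $\varphi$. The plan is simply to evaluate this operator identity on a principal vector and then read off the eigenvalue of $\varphi X$.

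First I would take $X$ with $AX = \ka X$ and apply (\ref{b6}) to it. Since $AX = \ka X$, one has $A\varphi A X = \ka\, A\varphi X$ and $\varphi A X = \ka\, \varphi X$, so the identity collapses to
$$ \Big(\ka - \tfrac{a}{2}\Big) A\varphi X = \Big(\tfrac{a\ka}{2} + c\eps\Big)\varphi X, $$
equivalently $(2\ka - a)\,A\varphi X = (a\ka + 2c\eps)\,\varphi X$. Whenever $2\ka \neq a$ this exhibits $\varphi X$ as an eigenvector of $A$ with eigenvalue $\bar{\ka} = (\ka a + 2c\eps)/(2\ka - a)$, which is exactly the claimed formula. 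Note moreover that $\varphi X \neq 0$ as soon as $X$ is not proportional to $\xi$: indeed (\ref{a5}) gives $\varphi^2 X = -\eps X$ on the horizontal part, so $\varphi X = 0$ would force $X \parallel \xi$. Hence $\bar{\ka}$ is genuinely a principal curvature.

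For the final assertion I would observe that $\varphi$ preserves $E_\ka$ exactly when applying $\varphi$ does not alter the eigenvalue, i.e. $\bar{\ka} = \ka$. Clearing the denominator, $\bar{\ka} = \ka$ reads $a\ka + 2c\eps = \ka(2\ka - a)$, which simplifies to $\ka^2 - a\ka - c\eps = 0$; this is the stated condition, and conversely whenever this equation holds one recovers $\bar\ka = \ka$, hence $\varphi X \in E_\ka$ and $\varphi$-invariance.

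The only point requiring care — the "main obstacle", such as it is — is the degenerate eigenvalue $2\ka = a$, for which the quotient defining $\bar{\ka}$ is meaningless. In that case (\ref{b6}) instead yields $\big(\tfrac{a^2}{2} + 2c\eps\big)\varphi X = 0$, forcing either $\varphi X = 0$ (so $X \parallel \xi$, $\ka = a$) or the exceptional relation $a^2 + 4c\eps = 0$. I would simply flag that this regime is excluded from the generic statement, which presupposes $\varphi X \neq 0$ and $2\ka \neq a$, so that $\bar{\ka}$ and the equivalence for $\varphi$-invariance are well defined.
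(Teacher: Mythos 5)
Your proposal is correct and follows essentially the same route as the paper: evaluating identity (\ref{b6}) on a principal vector $X$ and solving $\bigl(\ka - \tfrac{a}{2}\bigr)A\varphi X = \bigl(c\eps + \tfrac{a\ka}{2}\bigr)\varphi X$ for the eigenvalue of $\varphi X$, then setting $\bar\ka = \ka$ for the invariance criterion. Your extra care about the degenerate case $2\ka = a$ and about $\varphi X \neq 0$ is a genuine improvement over the paper's proof, which divides by $2\ka - a$ without comment.
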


\begin{proof}
We write Equation  (\ref{b6}) in the case when $A X=\ka X$:
$$\ka A\varphi X-\frac{a}{2}(A\varphi X+ \ka \varphi X)-c \eps  \varphi X=0,$$
so that
$$ (\ka -\frac{a}{2}) A\varphi X = (c \eps + \frac{a \ka}{2}) \varphi X,$$
i.e.
$$ A \varphi X= \frac{ \ka a+2  c\eps  }{2\ka -a} \varphi X,$$
so we get the required expression for $\bar{\ka}$ satisfying $A (\varphi X) = \bar{\ka} (\varphi X)$. Finally, is $E_\ka$ is $\varphi$-stable, we must have $\ka= \frac{ \ka a+2 c \eps  }{2\ka -a}$, which implies the last claim of the Lemma.
\end{proof}


\noindent \textit{Proof of Theorem  \ref{constant}.}

\smallskip

 We proceed by contradiction. By Equation (\ref{b5}) if $a$ is not constant, then $\xi\cdot a \neq0$.
Consider $Z$  a horizontal vector. So $\eta(Z)=0$ and, by Lemma \ref{l1}, we have $(\overline{\nabla}_\xi A) Z=0$.  It follows that
\begin{eqnarray}\label{r1}
0 &=& \overline{\nabla}_{\xi} \big( (\varphi A+A\varphi)Z \big) \nonumber \\
&=&
\varphi(\overline{\nabla}_{\xi}A)Z+(\overline{\nabla}_{\xi}A)\varphi Z.
\end{eqnarray}
We now write the Codazzi equation (\ref{Codazzi-equation}) with $X=\xi$ and $Y=Z$, yields, using (\ref{c1}):
\begin{eqnarray*}
(\overline{\nabla}_{\xi}A)Z&=&(\overline{\nabla}_{Z}A)\xi +  c \Big(  \eta(\xi) \varphi Z -  \eta(Z) \varphi \xi +
 2\eps  \< \xi, \varphi Z\> \xi \Big)\\
&=&  (\overline{\nabla}_{Z}A)\xi + c    \varphi Z \\
&=&  (Z\cdot a)\xi+ \eps( aId -A)\varphi AZ+c     \varphi Z\\
&=& \eps( aId -A)\varphi AZ+c    \varphi Z.
\end{eqnarray*}
(Equation (\ref{b5}) implies that $ (Z\cdot a)$ vanishes).
It follows that
\begin{eqnarray*}  \varphi(\overline{\nabla}_{\xi}A)Z&=&  \eps \varphi ( aId -A)\varphi AZ+ c   \varphi^2 Z\\
&=&\eps a \varphi^2 AZ -\eps  \varphi A\varphi A Z  - c \eps   Z\\
&=& -a AZ -  A^2 Z  -c \eps   Z.
 \end{eqnarray*}
Analogously, the Codazzi equation with   $X=\xi$ and $Y=\varphi Z$
\begin{eqnarray*}
(\overline{\nabla}_{\xi}A)\varphi Z&=&(\overline{\nabla}_{\varphi Z}A)\xi +  c \Big( \eta(\xi) \varphi^2 Z - \eta(\varphi Z) \varphi \xi +
 2 \eps\< \xi, \varphi^2 Z\> \xi \Big)\\
&=&  (\overline{\nabla}_{\varphi Z}A)\xi - c\eps     Z \\
&=& (\varphi Z\cdot a)\xi+ \eps( aId -A)\varphi A \varphi Z -  c \eps    Z\\
&=&\eps( aId -A)\varphi A \varphi Z -  c \eps   Z\\
&=&-  a AZ +A^2  Z -  c \eps    Z.
\end{eqnarray*}
By (\ref{r1}) we deduce that
$$  a AZ = -c \eps    Z.$$
This implies $a $ does not vanish, and moreover that the restriction of $A$ to the  horizontal space is $-c \eps   a^{-1} Id.$ Since the horizontal space is $\varphi$-stable, it follows from Lemma \ref{l4}, that
$$ \big(-c \eps   a^{-1}\big)^2  -a \big(- c  \eps   a^{-1}\big) -c \eps  =0,$$
i.e.
$$ \frac{c^2}{a^2}  + c \eps   -c \eps  =0.$$
This implies $c=0$, a contradiction.

\section{Proof of the Theorem \ref{tubes}} \label{s4}

We recall that $\M := \D\P^n$ or $\C \P^n_p$ and that
 $\widetilde{\M}:= \S^{2n+1}_{n,-c}$ or $\S^{2n+1}_{2p, c}$ is a bundle over $\M$ with projection $\pi.$ We still denote by $\<\cdot,\cdot\>$ the metric on $\widetilde{\M}$. We shall denote by $\widetilde{\nabla}$ the Levi-Civita connection on  $\widetilde{\M}$. This is nothing but the tangential part of the flat connection of $\R^{2n+2}$. We denote by $\widetilde{J}$ the complex (resp.\ para-complex) structure of $\C^{n+1}$ (resp.\ $\D^{n+1}$).

Let $F: U \to \M$ be a local parametrization of $\s$ and $\widetilde{F}: \s \to \widetilde{\M}$ a lift of $F$, i.e.\
 $\pi \circ \widetilde{F} = F$. In particular $\<\widetilde{F},\widetilde{F}\>=c\eps$. Observe that the choice of $\widetilde{F}$ is not unique, but none of them
satisfies
 $\< d\widetilde{F}(\cdot),\widetilde{J} \tilde{F}\>=0$, because  the integral submanifolds of the hyperplane distribution $\widetilde{J}^\perp$ have at most dimension~$n$ (Legendrian submanifolds).

By a slight abuse of notation, we still denote by $N$ the composition of the unit normal vector field on $F(U)$ with $F$. In other words,  $N  : U \to T{\M}$.
Let $\widetilde{N}$ be a lift of $N$, i.e.\ $\widetilde{N} : U \to T\widetilde{\M}$ such that
\begin{eqnarray*}
d \pi_{\widetilde{F}(x)} \circ \widetilde{N}(x) &=&N(x), \, \forall x \in U.
\end{eqnarray*}
In particular  $\widetilde{N} \in \widetilde{\M}$ (resp.\   $\overline{\widetilde{\M}}$) if  $\eps=1$ (resp.\ $\eps=-1$) and
$\<\widetilde{N},\widetilde{N}\>=1$. Moreover, we have
$$ \< d\widetilde{F}(\cdot), \widetilde{N}\>=0.$$ Since the immersion $d\widetilde{F}$ has co-dimension 2, the choice of $\widetilde{N}$ is not unique.
Since moreover $d\widetilde{F}$ is tranverse to the vector field $\widetilde{J} \widetilde{F}$, we may choose $\widetilde{N}$ in such a way that
$$\<\widetilde{N}, \widetilde{J} \widetilde{F}\>=0.$$

We denote by $\xi$ the tangent vector field on $U$ such that $\Xi := dF(\xi) = -JN$. We also set
$\widetilde{\Xi}:= -\widetilde{J}\widetilde{N}.$ Of course $\widetilde{\Xi}$ is the lift of $\Xi.$

\smallskip

We now assume that $F(U)$ is Hopf, i.e.\ $A \xi = a \xi$. By Theorem \ref{constant},  $a $ is some real constant. We need some extra notation:
we set $\eps':= c \eps $ and
$$ (\co', \si' ) := \left\{ \begin{array}{ll}  (\cos ,\sin ) & \mbox{ if } \eps'=1,\\
     (\cosh ,\sinh ) & \mbox{ if } \eps'=-1.
\end{array} \right.$$
We also set the obvious notations $\ta':=\frac{\si'}{\co'}$ and $\coe':=\frac{\co'}{\si'}$.
Finally, we
introduce $f:= \pi \circ \widetilde{f} : U \to \M$, where
$$\widetilde{f}:= \co'(\te) \widetilde{F} + \si' (\te) \widetilde{N} $$
and   $\te$ is some real constant to be determined later.

\smallskip

By Proposition 10, p.\ 97 of \cite{An1} (see also \cite{ON}), we have

\begin{eqnarray*} \widetilde{\nabla}_{\widetilde{\Xi}} \widetilde{N}  &=& \widetilde{\nabla_{\Xi} N}  +
\<\widetilde{J} \widetilde{F},\widetilde{J} \widetilde{F}\> \< \widetilde{\Xi}, \widetilde{J} \widetilde{N}\> \widetilde{J} \widetilde{F}\\
&=& \widetilde{\nabla_{\Xi} N}  - c  \widetilde{J} \widetilde{F}\\
&=&- a \widetilde{\Xi} -\eps \eps' \widetilde{J} \widetilde{F}.
\end{eqnarray*}
It follows that
\begin{eqnarray*}  d\widetilde{f}(\xi) &=&\co'(\te) d\widetilde{F}(\xi) + \si'(\te)d \widetilde{N} (\xi)\\
&= &\co'(\te) \widetilde{\Xi}+ \si'(\te)\widetilde{\nabla}_{\widetilde{\Xi}} \widetilde{N}\\
&=&-\eps \co'(\te) \widetilde{J} \widetilde{N}+\si' (\te) \left( a\eps \widetilde{J} \widetilde{N} - \eps \eps' \widetilde{J} \widetilde{F}\right)\\
& = & -\eps  \Big(\eps' \si'(\te) \widetilde{J} \widetilde{F} + (\co' (\te)- a \si'(\te) )\widetilde{J} \widetilde{N}\Big).
\end{eqnarray*}
We now choose $\te$ in order to have $df(\xi)=0$, i.e.\ $d \widetilde{f}(\xi) \in \widetilde{J} \widetilde{f}\, \R$.
This is equivalent to the existence of $\la \in \R$ such that
$$\left\{ \begin{array}{lll} \eps' \si'(\te) &=& \la \co'(\te), \\
 \co'(\te)- a \si'(\te) &=& \la \si'(\te).
\end{array} \right.$$
It follows that $\la =\eps'  \ta' (\te)$ and
$$ a = \frac{\co' (\te)}{\si' (\te)}-\la= \frac{(\co'(\te))^2-\eps' (\si'(\te))^2 }{\co' (\te) \si' (\te)}= 2 \coe'(2 \te)$$
so that, taking $$\te:=\frac{1}{2} \coe'^{-1} (a/2),$$
we get  $df(\xi)=0$.
This is possible for all real number $a$ if $\eps'=1$, and if $|a|>2$ if $\eps'=-1$.\footnote{In the case $\eps'=-1$, if we instead set
$$\widetilde{f}':=\sinh(\te) \widetilde{F} + \cosh (\te) \widetilde{N},$$
which is valued in $\overline{\widetilde{\M}}$, we get again
$a=2 \coth (2\te)$. The map $f':=\pi \circ \widetilde{f}'$ is the \em polar \em of $f := \pi \circ \widetilde{f}.$} Hence $f$ is constant
along the integral lines of $\xi$. In particular, the rank of $f$ is strictly less than $2n-1$.

\bigskip

We now claim that the rank of $f $ is even and that its image is a complex submanifold of $\M$.
We first calculate, for a horizontal vector  $v \in \Ho$:
\begin{eqnarray*} d \widetilde{f}(v) &=&\co'(\te) d\widetilde{F} (v)+ \si'(\te)d \widetilde{N} (v)\\
&=& \co'(\te) d\widetilde{F} (v) + \si'(\te) \widetilde{\nabla}_{ d\widetilde{F} (v)}        \widetilde{N}\\
&=&\co'(\te) d\widetilde{F} (v) + \si'(\te) \Big( -d\widetilde{F} (Av)
+ \< \widetilde{J}   \widetilde{F}, \widetilde{J} \widetilde{F} \> \< d\widetilde{F} (v), \widetilde{J}   \widetilde{N}\> \widetilde{J}\widetilde{F}  \Big)\\
&=& \co'(\te) d\widetilde{F} (v) - \si'(\te) d\widetilde{F} (Av) \\
&=& d\widetilde{F} (\co' (\te)v -\si'(\te)Av).
\end{eqnarray*}
Hence $Ker ( df) = Ker (\co' (\te) Id - \si'(\te) A)$ and therefore
$$ rank (f) = 2n - {\rm dim} Ker (\co'(\te) Id - \si'(\te) A)$$
Moreover, $v \in Ker(df)$ if and only if $Av = \coe'(\te) v$, i.e.\ $\coe'(\te)$ is a principal curvature of $F.$
Observe that $\coe'^2 (\te)-  \frac{\coe' (\te)}{2} a -\eps'=0$, so by Lemma \ref{l4} of Section \ref{s2}, the corresponding eigenspace is $J$-invariant. In particular the rank of $f$ is even.

  If $v$ does not belong to $Ker (\co'(\te) Id - \si'(\te) A),$ we claim that there exists $w $
such that $Jdf(v)= df(w).$
Since
$$  \widetilde{J}d \widetilde{f}(v)= d\widetilde{F}( \co'(\te) \varphi v - \si' (\te) \varphi Av)$$
this  is equivalent to
$$\varphi (\co' (\te) Id - \si' (\te) A) v =( \co'(\te) Id -\si' (\te) A)w.$$
Hence, we get the required relation setting
$$ w := ( \co'(\te) Id -\si' (\te) A)^{-1} \varphi(\co' (\te) Id - \si' (\te) A) v.$$
 This proves that $df(T U)$ is stable with respect to $J$, i.e.\ $f(U)$ is a complex submanifold.
The easy task to check that $F(U)$ is the tube of radius $\te$ over $f(U)$ is left to the reader.

\subsection{Open Problems}
Summarizing, in this paper some basic results  are presented and a  characterization of real hypersurfaces with Hopf curvature satisfying $|\alpha|>2$ in pseudo-Riemannian complex space forms and para-complex space forms is given. Therefore, a first question which is raised in a natural way is:

\emph{Are there real hypersurfaces in pseudo-Riemannian complex space forms or para-complex space forms whose Hopf curvature is small,  i.e. $|\alpha|\leq2$?}

Following similar steps to those which have been done in the study of real hypersurfaces in the cases of complex space forms, complex two-plane Grassmannians, etc., a great amount of questions concerning real hypersurfaces in pseudo-Riemannian complex space forms and para-complex space forms come up. For instance, it would be interesting to answer the following:

\emph{Are there real hypersurfaces in pseudo-Riemannian complex space forms or para-complex space forms whose shape operator commutes with $\varphi$, i.e. $A\varphi=\varphi A$?}


\begin{multicols}{2}
\noindent   Henri Anciaux \\
Universidade de S\~ao Paulo, IME \\
  1010 Rua do Mat\~ao,  \\
Cidade Universit\'aria   \\
 05508-090 S\~ao Paulo, Brazil   \\
henri.anciaux@gmail.com \\

\columnbreak

\noindent Konstantina Panagiotidou \\
Faculty of Engineering \\
Aristotle University of Thessaloniki \\
Thessaloniki 54124, Greece \\
kapanagi@gen.auth.gr

\end{multicols}
\end{document}